\theoremstyle{plain}
\newtheorem{theorem}{Theorem}[section]
\newtheorem{lemma}{Lemma}[section]
\newtheorem{proposition}{Proposition}[section]
\newtheorem{remark}{Remark}[section]
\numberwithin{equation}{section}
\newcommand\RR{\ensuremath{\mathbb{R}}}
\newcommand\ZZ{\ensuremath{\mathbb{Z}}}
\newcommand\NN{\ensuremath{\mathbb{N}}}
\newcommand\CC{\ensuremath{\mathbb{C}}}
\newcommand{\norm}[1]{\ensuremath{\lVert#1\rVert}}
\providecommand{\keywords}[1]{\medskip \noindent \textbf{Keywords.} #1}
\providecommand{\amsclass}[1]{\medskip \noindent \textbf{AMS subject classification.} #1}
\DeclareMathOperator{\coshsq}{cosh^{2}}
\DeclareMathOperator{\sinhsq}{sinh^{2}}
\begin{document}

\title{Instability in linear cooperative systems of ordinary differential
equations}

\author{Janusz Mierczy\'nski
\\
Faculty of Pure and Applied Mathematics
\\
Wroc{\l}aw University of Science and Technology
\\
Wybrze\.ze Wyspia\'nskiego 27
\\
PL-50-370 Wroc{\l}aw
\\
Poland
}
\date{}

\maketitle

\begin{abstract}
It is well known that, contrary to the autonomous case, the stability/instability of solutions of nonautonomous linear ordinary differential equations $x' = A(t) x$ is in no relation to the sign of the real parts of the eigenvalues of $A(t)$.  In~particular, the real parts of all eigenvalues can be negative and bounded away from zero, nonetheless there is a solution of magnitude growing to infinity.

In this paper we present a method of constructing examples of such systems when the matrices $A(t)$ have positive off-diagonal entries (strongly cooperative systems).  We illustrate those examples both with interactive animations and analytically.  The paper is written in such a way that it can be accessible to students with diverse mathematical backgrounds/skills.
\end{abstract}

\keywords{strongly cooperative system of linear ordinary differential equations, matrix exponential, instability.}

\amsclass{34C12, 34D99, 34A26, 34A40}

\section{Introduction}
\label{sec:introduction}

It is a well-known fact that for an autonomous system of linear
ordinary differential equations (ODEs)
\begin{equation*}
x' = Ax,
\end{equation*}
where $A$ is a {\em constant\/} $n$ by $n$ matrix with real entries, the zero solution is asymptotically stable if and only if the real parts of the eigenvalues of $A$ are negative.

Unfortunately, for nonautonomous systems of linear ODEs
\begin{equation}
\label{general-nonautonomous}
x' = A(t)x
\end{equation}
there is no hope for a similar result.  Indeed, one can find examples of systems~\ref{general-nonautonomous} such that for all $t$ all the eigenvalues of $A(t)$ are negative but there is a solution of~\ref{general-nonautonomous} whose norm tends to infinity as $t \to \infty$.   Some of those examples, although not a part of the standard curriculum, have made their way to textbooks, see, e.g. Example~III.7.1 in~\cite{Hale}.  For a nice paper on that subject, see~\cite{JoRo}.

\bigskip
The purpose of the present paper is to give a method for finding such examples when the linear system is {\em strongly cooperative\/}:  for each $t \in \RR$ the matrix $A(t)$ has positive off-diagonal entries.

The linear (and not only linear) strongly cooperative systems are of interest in itself, see, e.g., \cite{Sm} or \cite{HiSm}.  However, let us concentrate now on their biological relevance. For~instance, in some bacterial populations there is switching between two states (dormant vs.\ active).  It should be  remarked here that, to be sure, ``real life'' models are usually nonlinear, but a linear two-dimensional system, $x' = A(t) x$, can serve as a first approximation. If we let $x_1$ stand for the density of bacteria in the dormant stage and $x_2$ for the density of bacteria in the active stage, $a_{12}(t)$ (resp.\ $a_{21}(t)$) describes the transition rate from the active to the dormant state (resp.\ from the dormant into the active state) at time $t$. It is straightforward that $a_{12}(t)$ and $a_{21}(t)$ must be nonnegative for each $t$. See, for~example, \cite{MaSm}.

\smallskip
It is the survival of the population that is frequently of interest to us.  A mathematical expression of that survival is the notion of permanence.  Not delving into the details, this means that, however small the initial population is, after sufficiently long time it becomes and stays bounded away from zero, with the bound independent of the initial value.  In linear models this means simply that the magnitude of a solution tends to infinity as time goes to infinity.  One should bear in mind that the divergence to infinity is, in itself, a spurious artifact, as the (linear) model loses any relevance for large population densities.

\bigskip

In the main part of the paper, section \ref{sec:construction}, we give a construction of a linear time-periodic strongly cooperative two-dimensional system $x' = A(t) x$ of ordinary differential equations such that the larger of the (necessarily real) eigenvalues of $A(t)$ (the {\em principal eigenvalue\/}) is equal to $-1/2$ at all $t$ yet there exists an unstable solution.

The idea of our construction is the following:  during the first half of period, the evolution of the system is governed by a (far from symmetric) constant matrix having its eigenvector with both coordinates positive (the {\em principal eigenvector\/}) close to one coordinate axis, whereas during the second half of period, the evolution of the system is governed by another (again far from symmetric) constant matrix having its principal eigenvector close to the other coordinate axis; around the half-period there occurs a very fast (in section \ref{sec:construction} instantaneous) change in matrices.  It should be emphasized here that periodicity is not a necessary feature of the construction: Its r\^ole is rather to streamline the argument.  Analogous examples can be obtained for other linear systems that are in an appropriate sense recurrent in time, whether nonautonomous or random (an example is given in section \ref{sec:non-periodic}).

\medskip
The paper is written in a reasonably self-contained way.  It is assumed that the reader knows the standard facts from linear ordinary differential equations (transition matrices, etc.).  However, the knowledge of time-dependent (or even time-periodic) linear ordinary differential equations is not indispensable (except section \ref{sec:continuous}): As the systems considered are piecewise constant,  knowing basic properties of the matrix exponentials should suffice.

In subsection \ref{subsec:ODE-systems} we give a review of standard results on the properties of solutions of systems of linear ordinary differential equations, not necessarily autonomous, while in section \ref{subsec:exp-matrix} we present results on the exponents of matrices.

Subsection \ref{subsec:action-on-unit-circle} is devoted to analytical study of the action of the matrix exponential on the lengths and directions of vectors.

As the material related to matrices with positive off-diagonal entries does not usually form a part of the curriculum, in subsection \ref{subsec:positive-matrices} we give necessary proofs.  While this usually requires using rather advanced methods (see, e.g., \cite{BP}), in our case needed proofs are given by using only the knowledge in calculus and elementary algebra.

Subsection \ref{subsec:strongly-cooperative} deals with a proof of (the linear specialization of) a celebrated result due to M\"uller and Kamke on the order preserving property of quasimonotone systems (see~\cite{HiSm}).  Indeed, two alternative proofs of that property are given.  The first (the proof of \ref{thm:strongly-monotone}) uses tools from calculus and is decidedly nonlinear in its spirit.  However, its full strength is used only in ection \ref{sec:continuous}.  An alternative is the proof of \ref{thm:strongly-monotone-nonautonomous}, which uses only basic properties of the exponential of a matrix.

In Subsection \ref{subsec:action-on-unit-circle_continued} we continue our analysis of the action of the matrix exponential on the vector directions from subsection \ref{subsec:action-on-unit-circle}, this time under the assumption that the matrix has positive off-diagonal entries.  The material is illustrated by pictures and animations.

\smallskip
After all those preliminaries we proceed in section \ref{sec:construction} to give the construction of our example. We give first an idea and then explain, assisted by pictures, why the construction should be O.K.   Then, in subsubsection \ref{subsubsec:analysis} we give two alternative ``hard'' analytical proofs of the existence of an unstable solution.   The first proof rests on direct computation of the largest eigenvalue of a transition matrix, and requires only the knowledge of the fundamental properties of matrix exponential.   The second proof uses the Peano--Baker series.

\smallskip
Section \ref{sec:continuous} requires more advanced knowledge (however, not reaching beyond the Gronwall inequality or matrix norms).  It can serve as a basis for some undergraduate homework.

\medskip
In section \ref{sec:extensions} we give a couple of extensions and generalizations (which can again be the subject of some undergraduate work).  Section \ref{sec:non-periodic} provides extension to the case of non\nobreakdash-\hspace{0pt}periodic systems.  Its reading requires the knowledge of standard calculus.

\medskip
Finally, section \ref{sec:overview} (Discussion) puts the material presented in the perspective of what is already known.

\section{Preliminaries}
\label{sec:preliminaries}

\subsection{Systems of linear ODEs}
\label{subsec:ODE-systems}

Consider a system of two linear ODEs
\begin{equation}
\label{eq:ODE}
x' = A(t)x,
\end{equation}
where we assume that $A \colon J \to \RR^{2 \times 2}$ is a continuous matrix function ($J \subset \RR$ is an interval not reducing to a singleton, and $\RR^{2 \times 2}$ denotes the set of real $2$ by $2$ matrices).

It is a standard result in the course in ODEs that for each $s \in J$ and each $x_0 \in \RR^2$ there exists a unique solution, $x(\cdot; s, x_0)$, of the initial value problem
\begin{equation*}
\begin{cases}
x' = A(t)x \\
x(s) = x_0,
\end{cases}
\end{equation*}
and that solution is defined on the whole of $J$.

Usually stress is laid on fundamental matrices (cf.\ \cite{C} or~\cite{Hale}):  $X(\cdot)$ is a fundamental matrix solution of~\ref{eq:ODE} if its columns form a basis of the vector space of solutions of~\ref{eq:ODE}.  For our purposes, however, it is better to use the {\em transition matrix\/} (see~\cite{C}), that is, a matrix function of two variables, $X = X(t;s)$, $s, t \in J$, such that for any $s \in J$ and any $x_0 \in \RR^2$, there holds
\begin{equation*}
x(t; s, x_0) = X(t; s) x_0, \qquad t \in J.
\end{equation*}
If $X(\cdot)$ is a fundamental matrix solution, the transition matrix is given by the formula
\begin{equation}
\label{eq:fundamental-to-transition}
X(t; s) = X(t) X^{-1}(s), \qquad s, t \in J.
\end{equation}
The transition matrix is unique.

We mention here important properties of the transition matrix:
\begin{proposition}
\label{prop:transition-properties}
\begin{enumerate}
\item[\textup{(1)}]
$X(s; s) = I$, for any $s \in J$, where $I$ is the identity
matrix;
\item[\textup{(2)}]
$X(u;s) = X(u; t) X(t; s)$, for any $s, t, u \in J$;
\item[\textup{(3)}]
$X^{-1}(t;s) = X(s; t)$, for any $s, t \in J$.
\item[\textup{(4)}]
$\displaystyle \frac{\partial}{\partial t} X(t;s) = A(t) X(t;s)$,
for any $s, t \in J$.
\end{enumerate}
\end{proposition}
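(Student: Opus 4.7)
The plan is to deduce all four assertions directly from the representation $X(t;s) = X(t) X^{-1}(s)$ supplied by formula \ref{eq:fundamental-to-transition}, where $X(\cdot)$ is any fundamental matrix solution of \ref{eq:ODE}. Invertibility of $X(t)$ at every $t \in J$ is what makes each expression below well-defined; it is a standard consequence of the linear independence of the columns of a fundamental matrix (for instance via Liouville's formula, or equivalently via uniqueness of solutions to the initial value problem).

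Items (1)--(3) are then purely algebraic. For (1), setting $t = s$ gives $X(s;s) = X(s) X^{-1}(s) = I$. For (2), the composition telescopes: $X(u;t)\,X(t;s) = X(u) X^{-1}(t) X(t) X^{-1}(s) = X(u) X^{-1}(s) = X(u;s)$. Item (3) follows by combining (1) and (2) with $u = s$, obtaining $X(s;t)\,X(t;s) = X(s;s) = I$, so $X(s;t) = X^{-1}(t;s)$.

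For item (4), I would use that each column of $X(\cdot)$ is a solution of \ref{eq:ODE}, so the matrix-valued function $X(\cdot)$ itself satisfies $X'(t) = A(t) X(t)$. Differentiating the representation in $t$, with $s$ regarded as a parameter and $X^{-1}(s)$ therefore a constant matrix, yields $\tfrac{\partial}{\partial t} X(t;s) = X'(t)\, X^{-1}(s) = A(t) X(t) X^{-1}(s) = A(t) X(t;s)$.

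I do not expect a genuine obstacle: the proposition amounts to routine algebraic manipulation once the representation $X(t;s) = X(t) X^{-1}(s)$ is in hand, and the substantive input (invertibility of a fundamental matrix throughout $J$) is standard. The only mild point worth flagging is that the identity $X(t;s) = X(t) X^{-1}(s)$ holds for \emph{every} fundamental matrix solution $X(\cdot)$, so the conclusions do not depend on the particular basis of solutions chosen; alternatively, one can establish (1)--(4) intrinsically from the defining property $x(t;s,x_0) = X(t;s) x_0$ by applying it to the standard basis vectors and invoking uniqueness of solutions, which bypasses the use of a specific fundamental matrix altogether.
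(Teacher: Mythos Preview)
Your proof is correct. The paper itself does not supply a proof of this proposition: it merely lists the four properties as ``important properties of the transition matrix'' and moves on, treating them as standard facts. Your argument is exactly the natural one suggested by the paper's own setup, since formula~\ref{eq:fundamental-to-transition} is introduced just before the proposition precisely to express $X(t;s)$ in terms of a fundamental matrix solution, and all four items follow from that representation by the manipulations you give.
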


\subsection{Systems of autonomous linear ODEs.  The matrix $e^{t A}$}
\label{subsec:exp-matrix}

In modern courses in ODEs, when considering systems of autonomous linear ordinary differential equations
\begin{equation}
\label{eq:ODE-autonomous}
x' = A x,
\end{equation}
usually a matrix function $t \mapsto e^{tA}$ is introduced, where
\begin{equation*}
e^{tA} := \sum_{k=0}^{\infty} \frac{t^{k} A^{k}}{k!}.
\end{equation*}
Occasionally, for typographical reasons we write $\exp(tA)$ instead~of $e^{tA}$.  It is proved that the above series has convergence radius infinity, the function $t \mapsto e^{tA}$ is differentiable, and the relations
\begin{itemize}
\item
$e^{0 \cdot A} = I$,
\item
$e^{(s + t)A} = e^{sA} e^{tA}$, $s, t \in \RR$,
\item
$(e^{tA})^{-1} = e^{-tA}$, $t \in \RR$,
\item
$\dfrac{d}{dt} e^{tA} = A e^{tA} = e^{tA} A$, $t \in \RR$
\end{itemize}
hold. Consequently, the solution of the initial-value problem for a system of ordinary
differential equations with time-independent matrix $A$,
\begin{equation}
\label{eq:autonomous-IVP}
\begin{cases}
x' = Ax,
\\
x(0) = x_0
\end{cases}
\end{equation}
equals
\begin{equation*}
e^{tA} x_0.
\end{equation*}

We would like to put the above into the context of transition matrices.  Since the matrix function $e^{tA}$ is (a special case of) a fundamental matrix solution of~\ref{eq:ODE-autonomous},
by using the formula~\ref{eq:fundamental-to-transition} we obtain
\begin{equation}
\label{eq:transition-autonomous}
X(t; s) = X(t) X^{-1}(s) = e^{tA}  (e^{sA})^{-1} = e^{tA}  e^{-sA} = e^{(t - s) A}, \quad s, t \in \RR.
\end{equation}

\smallskip
We will use in the sequel the following fact.
\begin{equation}
\label{eq:commute}
\text{If } AB = BA \text{ then } e^{t (A + B)} = e^{tA} e^{tB} =
e^{tB} e^{tA} \text{ for all } t \in \RR.
\end{equation}
However, for general $A, B$ the above equalities need not hold.

\subsection{The action of $e^{tA}$ on the unit circle}
\label{subsec:action-on-unit-circle}
In this subsection we shall analyze how the radiuses and directions of solutions of the system $x' = Ax$ change in~time.  In other words, we investigate the action of $e^{tA}$ on vectors in $\RR^2$.

\smallskip
We start by introducing some notation.

Recall that we can represent $x \in \RR^{2}$ in polar coordinates, $x = r \, [\cos{(\theta)} \ \ \sin{(\theta)}]^{\top}$, where $r = \norm{x} = \sqrt{x \cdot x}$ is the {\em length\/} (magnitude, norm) and $\theta$ is the {\em polar angle\/} of $x$.

We denote by $\mathbb{S}$ the set of all vectors $y \in \RR^2$ with
unit length.  In other words, $\mathbb{S}$ is the unit circle.

\medskip
Let $x(t)$ be a nontrivial (that is, not equal constantly to zero) solution of $x' = A x$.  That is, $x(t) = e^{tA}x_0$ for some nonzero $x_0$.

\subsubsection{How does $e^{tA}$ act on the lengths of vectors?}
\label{subsubsect:acting-on-lengths}

As a warm-up we try to find an ordinary differential equation satisfied by $\norm{x(t)}$. After some calculus we obtain
\begin{equation}
\label{eq:action-exp-radius}
\begin{aligned}
\frac{d}{dt} \norm{x(t)} = & {} \frac{d}{dt} (x(t) \cdot x(t))^{1/2}
\\
= & {} \frac{1}{2} \frac{x'(t) \cdot x(t) + x(t) \cdot x'(t)}{(x(t) \cdot x(t))^{1/2}} = \frac{Ax(t) \cdot x(t)}{\norm{x(t)}}.
\end{aligned}
\end{equation}

\subsubsection{How does $e^{tA}$ act on the directions of vectors?}
\label{subsubsect:acting-on-directions}

The present subsubsection can be skipped, since it will be needed later only for heuristic considerations in subsubsection \ref{subsubsec:heuristics}.

Let us find an ordinary differential equation that is satisfied by the direction of $x(t)$.  We differentiate
\begin{equation*}
\begin{aligned}
\frac{d}{dt} \frac{x(t)}{\norm{x(t)}} = {} & \frac{d}{dt} \Bigl(
\bigl( x(t) \cdot x(t) \bigr)^{-1/2} x(t) \Bigr)
\\
= {} & \Bigl( \frac{d}{dt} \bigl( x(t) \cdot x(t) \bigr)^{-1/2} \Bigr) \, x(t) + \bigl( x(t) \cdot x(t) \bigr)^{-1/2} \frac{d}{dt} x(t)
\\
= {} & -\frac{1}{2} \bigl( x(t) \cdot x(t) \bigr)^{-3/2} \, 2 \bigl( A x(t) \cdot x(t) \bigr) x(t) + \bigl( x(t) \cdot x(t) \bigr)^{-1/2} A x(t)
\\
= {} & \frac{1}{\norm{x(t)}} \biggl( A - \frac{A x(t) \cdot x(t) }{\norm{x(t)}^2} I \biggr) x(t) = \biggl( A - \Bigl( A \frac{x(t)}{\norm{x(t)}} \cdot \frac{x(t)}{\norm{x(t)}} \Bigr) I \biggr) \frac{x(t)}{\norm{x(t)}},
\end{aligned}
\end{equation*}
or, after putting $y(t) := x(t)/\norm{x(t)}$,
\begin{equation*}
\frac{d}{dt} y(t)  = \bigl( A - (A y(t) \cdot y(t)) I \bigr) y(t).
\end{equation*}
We can say that $y(t)$ is a solution of a system of two (nonlinear) ordinary differential equations, written in the matrix form as
\begin{equation}
\label{eq:polar-angle}
y'  = \bigl( A - (A y \cdot y) I \bigr) y.
\end{equation}
Observe that for any $y \in \mathbb{S}$ the vector $\bigl( A - (A y \cdot y) I \bigr) y$ is perpendicular to $y$. Indeed, there holds
\begin{equation*}
\bigl( A - (A y \cdot y) I \bigr) y \cdot y = A y \cdot y -  (A y \cdot y) y \cdot y = (A y \cdot y) ( 1 - \norm{y}^2) = 0.
\end{equation*}
It follows that for a solution $y(t)$ of~\ref{eq:polar-angle} we have
\begin{equation*}
\frac{d}{dt} \norm{y(t)}^2 = 2 (y'(t) \cdot y(t)) =  \bigl( A - (A y(t) \cdot y(t)) I \bigr) y(t) \cdot y(t)  = 0,
\end{equation*}
from which we can conclude that, if at an initial moment $s$ the value $\norm{y(s)}$ is equal to one then it is equal to one at any time.  So, although system~\ref{eq:polar-angle} is well defined for all $y \in \RR^2$, we will consider it for $y$ belonging to $\mathbb{S}$ only.

\medskip
Let us find what are the equilibria of~\ref{eq:polar-angle}, that is, those $\eta \in \mathbb{S}$ for which $\bigl( A - (A \eta \cdot \eta) I \bigr) \eta = [0 \ \ 0]^{\top}$.  We have then $A \eta = (A \eta \cdot \eta) \eta$, which translates into $\eta$ being an eigenvector of the matrix $A$, corresponding to an eigenvalue $A \eta \cdot \eta$.

If $y \in \mathbb{S}$ is not an eigenvector of $A$ then the nonzero vector $\bigl( A - (A y \cdot y) I \bigr) y$, perpendicular to $y$, points either clockwise or counterclockwise.

\medskip
We will return later, in Subsection~\ref{subsec:action-on-unit-circle_continued}, to analyzing the action of $e^{tA}$.

\subsection{Matrices with positive off-diagonal entries\ ---\ Their spectral
properties}
\label{subsec:positive-matrices}

Since, as mentioned in the Introduction, linear differential equations having  matrices with positive off-diagonal entries are our main object of study, we give in the present subsection some useful information on spectral properties of such 2 by 2 matrices.

We write $\RR^{2}_{+}$ for the set of all those $x = [x_1 \ \ x_2]^{\top}$ such that $x_1 \ge 0$ and $x_2 \ge 0$, and $\RR^{2}_{++}$ for the set of all those $x = [x_1 \ \ x_2]^{\top}$ such that $x_1 > 0$ and $x_2 > 0$.

Let $\mathcal{M}$ stand for the family of real $2 \times 2$ matrices having their off-diagonal entries positive, and let $\mathcal{P}$ stand for the family of real $2 \times 2$ matrices having all entries positive.

Our first result is usually known as the Frobenius--Perron theorem. As we are in dimension two, we will give here an elementary proof of it.
\begin{proposition}
\label{prop:principal}
Let $A = [a_{ij}]_{i,j=1}^2 \in \mathcal{M}$.  Then the following holds:
\begin{enumerate}
\item[{\rm (i)}]
$A$ has two real eigenvalues \textup{(}denoted $\lambda_2 < \lambda_1$\textup{)}.
\item[{\rm (ii)}]
An eigenvector $u$ corresponding to $\lambda_1$ can be taken to have its coordinates positive.
\item[{\rm (iii)}]
$\lambda_1 > \max\{a_{11}, a_{22}\}$, and $\lambda_2 < \min\{a_{11}, a_{22}\}$.
\item[{\rm (iv)}]
An eigenvector $v$ corresponding to $\lambda_2$ has its coordinates \textup{(}nonzero and\textup{)} of opposite signs.
\end{enumerate}
\end{proposition}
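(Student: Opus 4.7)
The plan is to extract everything directly from the characteristic polynomial, since in dimension two we can write the eigenvalues down by the quadratic formula and then read off the signs we need. So let $A = [a_{ij}]_{i,j=1}^{2}$ with $a_{12},a_{21}>0$, and form
\begin{equation*}
p(\lambda) = \det(\lambda I - A) = \lambda^{2} - (a_{11}+a_{22})\lambda + (a_{11}a_{22} - a_{12}a_{21}).
\end{equation*}
The discriminant is $(a_{11}+a_{22})^{2} - 4(a_{11}a_{22}-a_{12}a_{21}) = (a_{11}-a_{22})^{2} + 4 a_{12}a_{21}$, which by the hypothesis $a_{12}a_{21}>0$ is strictly positive. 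This immediately yields (i): two distinct real eigenvalues, which we label $\lambda_{2}<\lambda_{1}$.

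For (iii), I would just apply the quadratic formula. Writing $\Delta := \sqrt{(a_{11}-a_{22})^{2} + 4 a_{12}a_{21}}$, we have $\Delta > \abs{a_{11}-a_{22}}$ strictly, and
\begin{equation*}
\lambda_{1} = \tfrac{1}{2}\bigl((a_{11}+a_{22}) + \Delta\bigr) > \tfrac{1}{2}\bigl((a_{11}+a_{22}) + \abs{a_{11}-a_{22}}\bigr) = \max\{a_{11},a_{22}\},
\end{equation*}
and symmetrically $\lambda_{2} < \min\{a_{11},a_{22}\}$. There is nothing more to do here.

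For (ii) and (iv) I would solve for the eigenvectors from the first row of $(A-\lambda I)u=0$. This gives $(a_{11}-\lambda)u_{1} + a_{12} u_{2} = 0$, hence
\begin{equation*}
u_{2} = \frac{\lambda - a_{11}}{a_{12}}\, u_{1}.
\end{equation*}
Because $a_{12}>0$, the sign of $u_{2}/u_{1}$ is the sign of $\lambda - a_{11}$. By (iii) this is positive for $\lambda=\lambda_{1}$ (so taking $u_{1}>0$ forces $u_{2}>0$, proving (ii)) and negative for $\lambda=\lambda_{2}$ (so $v_{1}$ and $v_{2}$ have opposite signs, proving (iv)). In both cases $u_{1}$ and $v_{1}$ are actually nonzero: if $u_{1}=0$ then the eigenvector equation $a_{12}u_{2}=0$ forces $u_{2}=0$ as well, contradicting that $u$ is an eigenvector.

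I do not expect any real obstacle; the whole argument is essentially one application of the quadratic formula together with the observation that in dimension two the sign structure of an eigenvector is dictated by a single ratio. The only thing worth double-checking is that (iii) is used precisely to pin down the strict inequality $\lambda_{1}-a_{11}>0$ (and not merely $\ge 0$), which is what guarantees the strict positivity of $u_{2}$ in (ii); the strict inequalities in (iii) in turn come from $a_{12}a_{21}>0$, i.e.\ from the hypothesis $A\in\mathcal{M}$ rather than just $A$ having nonnegative off-diagonal entries.
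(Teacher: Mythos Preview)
Your proof is correct and follows essentially the same elementary route as the paper's: discriminant for (i), the quadratic formula for the eigenvalue bounds, and the eigenvector equation for the sign structure. The one difference is that you establish (iii) before (ii) directly from $\Delta > \lvert a_{11}-a_{22}\rvert$, which lets you handle (ii) uniformly; the paper instead proves (ii) first via a case split on whether $a_{11}\ge a_{22}$ or $a_{11}<a_{22}$ and then deduces (iii) from the existence of a positive eigenvector.
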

\begin{proof}
(i)
The characteristic polynomial of $A$ has the form
\begin{equation*}
p_{A}(\lambda) = \lambda^2 - (a_{11} + a_{22}) \lambda + (a_{11} a_{22} - a_{12} a_{21}),
\end{equation*}
with discriminant
\begin{equation*}
\Delta = (a_{11} + a_{22})^2 - 4 (a_{11} a_{22} - a_{12} a_{21}) = (a_{11} - a_{22})^2 + 4 a_{12} a_{21} > 0.
\end{equation*}
Consequently $A$ has two real eigenvalues, $\lambda_2 < \lambda_1$.

\smallskip
(ii)
Since $\lambda_1 + \lambda_2 = a_{11} + a_{22}$, we have $\lambda_1 > \frac{1}{2}(a_{11}+a_{22})$.

Now, if $a_{11} \ge a_{22}$, notice that $[1 \ \ \frac{a_{21}}{\lambda_1 - a_{22}}]^{\top}$, where $\frac{a_{21}}{\lambda_1 - a_{22}} > 0$, is an eigenvector of $A$ corresponding to $\lambda_1$.  If $a_{11} < a_{22}$ then $[\frac{a_{12}}{\lambda_1 - a_{11}} \ \ 1 ]^{\top}$, where $\frac{a_{12}}{\lambda_1 - a_{11}} > 0$, is an eigenvector of $A$ corresponding to $\lambda_1$.

\smallskip
(iii)
For $u = [u_1 \ \ u_2]^{\top}$ we have $(a_{11} - \lambda_1) u_{1} + a_{12} u_{2} = a_{21} u_{1} + (a_{22} - \lambda_1) u_{2} = 0$, so, since $u_{1}$ and $u_{2}$ have the same sign, we must have $\lambda_1 > a_{11}$ and $\lambda_1 > a_{22}$.

The other inequality follows from the first one by the fact that $\lambda_1 + \lambda_2 = a_{11} + a_{22}$.

\smallskip
(iv)
For $v = [v_1 \ \ v_2]^{\top}$, observe that $(a_{11} - \lambda_2) v_{1} + a_{12} v_{2} = 0$, and apply the second inequality in (iii).
\end{proof}

The larger eigenvalue, $\lambda_1$, of $A \in \mathcal{M}$ will be called the {\em principal eigenvalue\/} of $A$ (sometimes the terms {\em dominant\/}, {\em leading\/}, or {\em Perron eigenvalue\/} are used).  An eigenvector $u$ of $A$ pertaining to the principal eigenvalue will be called a {\em principal eigenvector\/} of $A$. When speaking of a principal eigenvector we always assume that both its coordinates are positive.

A principal eigenvector of length one is called {\em normalized}.  A normalized principal eigenvector of a matrix in $\mathcal{M}$ is unique.

\bigskip
The following will be needed in Subsubsection~\ref{subsubsec:analysis}.
\begin{lemma}
\label{lm:monotone-dependence}
The principal eigenvalue of a matrix in $\mathcal{M}$ is a strongly increasing function of any of its entries.
\end{lemma}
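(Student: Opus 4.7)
My plan is to argue by direct computation using the closed form of $\lambda_1$ furnished by the quadratic formula applied to the characteristic polynomial in the proof of Proposition~\ref{prop:principal}. Writing
\begin{equation*}
\lambda_1(a_{11},a_{12},a_{21},a_{22}) = \frac{a_{11}+a_{22}}{2} + \frac{1}{2}\sqrt{(a_{11}-a_{22})^2 + 4 a_{12} a_{21}},
\end{equation*}
I treat $\lambda_1$ as a smooth function of the four entries (smoothness is guaranteed since the discriminant $\Delta = (a_{11}-a_{22})^2+4a_{12}a_{21}$ is strictly positive on $\mathcal{M}$). It then suffices to show that each of the four partial derivatives is strictly positive throughout $\mathcal{M}$.

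The off-diagonal cases are immediate: a direct differentiation gives
\begin{equation*}
\frac{\partial \lambda_1}{\partial a_{12}} = \frac{a_{21}}{\sqrt{\Delta}}, \qquad \frac{\partial \lambda_1}{\partial a_{21}} = \frac{a_{12}}{\sqrt{\Delta}},
\end{equation*}
both strictly positive because $a_{12}>0$ and $a_{21}>0$ by the definition of $\mathcal{M}$.

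The diagonal cases are the main (mild) obstacle, because a priori the derivative of the square root term can have either sign. For $a_{11}$ one gets
\begin{equation*}
\frac{\partial \lambda_1}{\partial a_{11}} = \frac{1}{2} + \frac{a_{11}-a_{22}}{2\sqrt{\Delta}},
\end{equation*}
and the analogous expression for $a_{22}$. The positivity here follows from the key inequality $\sqrt{\Delta} > |a_{11}-a_{22}|$, which is an immediate consequence of $4 a_{12} a_{21} > 0$. This gives $\bigl|(a_{11}-a_{22})/\sqrt{\Delta}\bigr| < 1$, hence the derivative lies in the open interval $(0,1)$. By symmetry the same holds for $\partial \lambda_1/\partial a_{22}$.

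Having shown all four partial derivatives are strictly positive on $\mathcal{M}$, the one-variable mean value theorem (applied along a line segment in $\mathcal{M}$ obtained by varying a single entry while keeping the others fixed, which stays in $\mathcal{M}$ since one only increases the chosen entry and the positivity of the off-diagonal entries is preserved) yields strict monotonicity of $\lambda_1$ in each entry, which is exactly the claim.
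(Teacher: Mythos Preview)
Your proof is correct and follows essentially the same approach as the paper's own proof: both use the explicit formula for $\lambda_1$ from the quadratic formula, dispense with the off-diagonal cases as immediate, and handle the diagonal case by computing $\partial\lambda_1/\partial a_{11}$ and invoking the inequality $|a_{11}-a_{22}| < \sqrt{\Delta}$ (equivalently, $(a_{11}-a_{22})/\sqrt{\Delta} > -1$), which comes from $a_{12}a_{21}>0$. Your write-up is somewhat more explicit (you compute the off-diagonal derivatives and mention the mean value theorem and the $a_{22}$ case), but the argument is the same.
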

\begin{proof}
Recall that the principal eigenvalue of $A = [a_{ij}]_{i, j = 1}^n \in \mathcal{M}$ is given by
\begin{equation*}
\lambda_1 = \frac{a_{11} + a_{22} + \sqrt{(a_{11} - a_{22})^2 + 4 a_{12} a_{21}}}{2}
\end{equation*}
The strongly monotone dependence of $\lambda_1$ on $a_{12}$ or on $a_{21}$ is straightforward.  To prove the dependence on $a_{11}$, observe that
\begin{equation*}
\frac{\partial }{\partial (a_{11})} \lambda_1 = \frac{1}{2} \biggl( 1 + \frac{a_{11} - a_{22}}{\sqrt{(a_{11} - a_{22})^2 + 4 a_{12} a_{21}}} \biggr).
\end{equation*}
But, as $a_{12} a_{21} > 0$, one has
\begin{equation*}
\frac{a_{11} - a_{22}}{\sqrt{(a_{11} - a_{22})^2 + 4 a_{12} a_{21}}} > - 1,
\end{equation*}
which gives that the above partial derivative is bigger than zero.
\end{proof}

\subsection{Strongly cooperative systems of linear ODEs}
\label{subsec:strongly-cooperative}

Recall that a system of linear ODEs
\begin{equation}
\label{eq:cooperative}
x' = A(t)x
\end{equation}
is called {\em strongly cooperative\/} if for each $t \in J$ the matrix $A(t)$ belongs to $\mathcal{M}$.  In this subsection we assume that the matrix function $A(\cdot)$ is continuous.

Now we will give the two-dimensional version of the M\"uller--Kamke theorem.  It is formulated in the linear setting, however a closer inspection shows that its proof is rather nonlinear in the spirit.
\begin{theorem}
\label{thm:strongly-monotone}
Assume that system~\textup{\ref{eq:cooperative}} is strongly cooperative.  Then $X(t; s) \in \mathcal{P}$ for any $s < t$, $s, t \in J$.
\end{theorem}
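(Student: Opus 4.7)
My plan is to reduce the matrix statement to a statement about individual solutions: the $i$th column of $X(t;s)$ is the solution $x^{(i)}(\cdot)$ of \ref{eq:cooperative} with $x^{(i)}(s) = e_{i}$, so it suffices to show that any solution starting from a nonzero vector in $\RR^{2}_{+}$ belongs to $\RR^{2}_{++}$ at every time $t > s$. Everything will hinge on the following elementary geometric observation: on the nonzero part of $\partial \RR^{2}_{+}$ the vector field $A(\tau) x$ points strictly into $\RR^{2}_{++}$. Indeed, if $x_{1} = 0$ and $x_{2} > 0$, the first coordinate of $A(\tau) x$ equals $a_{12}(\tau) x_{2} > 0$, and symmetrically at the other coordinate axis.

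First I would use this to get immediate entry into $\RR^{2}_{++}$. For $x(s) = e_{1}$, continuity gives $x_{1}(t) > 0$ near $s$, while $x_{2}(s) = 0$ and $x_{2}'(s) = a_{21}(s) > 0$ force $x_{2}(t) > 0$ for $t$ just to the right of $s$; the $e_{2}$ case is symmetric. Next I would invoke a classical forward-invariance argument by setting
\begin{equation*}
t^{*} := \sup\{\, T > s : x(\tau) \in \RR^{2}_{++} \text{ for every } \tau \in (s, T] \,\}
\end{equation*}
and showing $t^{*} = \sup J$. If not, continuity yields $x(t^{*}) \in \overline{\RR^{2}_{++}}$ with at least one zero coordinate by maximality; say $x_{1}(t^{*}) = 0$. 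Since $x_{1}(\tau) > 0$ on $(s, t^{*})$, the left-hand difference quotient of $x_{1}$ at $t^{*}$ is nonpositive, so $x_{1}'(t^{*}) \le 0$. But
\begin{equation*}
x_{1}'(t^{*}) = a_{11}(t^{*}) \cdot 0 + a_{12}(t^{*}) x_{2}(t^{*}) = a_{12}(t^{*}) x_{2}(t^{*}),
\end{equation*}
and since $a_{12}(t^{*}) > 0$, this forces $x_{2}(t^{*}) = 0$. Then $x(t^{*}) = 0$, and uniqueness of solutions of the initial value problem yields $x \equiv 0$, contradicting $x(s) \ne 0$.

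The step I expect to require the most care is the crossing at $t^{*}$: one must line up the sign of the one-sided derivative coming from the supremum property with the sign coming from the off-diagonal entry of $A(t^{*})$, and then invoke uniqueness to rule out the degenerate possibility $x(t^{*}) = 0$. This is precisely the point where strong cooperativity (strict positivity of the off-diagonal entries, as opposed to mere nonnegativity) is used. Once this is cleanly executed, applying the argument to $x^{(1)}$ and $x^{(2)}$ shows that every entry of $X(t;s)$ is strictly positive for $t > s$, i.e., $X(t;s) \in \mathcal{P}$.
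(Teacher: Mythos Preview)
Your proposal is correct and follows essentially the same approach as the paper: reduce to the two column solutions with initial values $e_1, e_2$, show immediate entry into $\RR^{2}_{++}$, take the first time a coordinate vanishes, and derive a sign contradiction from the one-sided derivative and the positivity of the off-diagonal entry. The only cosmetic difference is the placement of the uniqueness argument: the paper invokes it at the outset (so that at the crossing time the other coordinate is automatically strictly positive and the contradiction $x_{1}'(\tau) > 0$ is immediate), whereas you postpone it, first forcing $x_{2}(t^{*}) = 0$ from $x_{1}'(t^{*}) \le 0$ and $x(t^{*}) \in \overline{\RR^{2}_{++}}$, and then appealing to uniqueness to rule out $x(t^{*}) = 0$.
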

\begin{proof}
Fix an initial moment $s \in J$.  We start by noting that the first column of the matrix $X(t; s)$ is the value at time $t$ of the solution $[x_1(t) \ \ x_2(t)]^{\top}$ of system $x' = A(t) x$ satisfying the initial condition $x(s) = [1 \ \ 0]^{\top}$.  It follows from the uniqueness of the initial value problem for linear systems of ordinary differential equations that for any $t \in J$ both $x_1(t)$ and $x_2(t)$ cannot be simultaneously equal to zero.

Since $[x_1(t) \ \ x_2(t)]^{\top}$ satisfies the system~\ref{eq:cooperative}, we have $x'_2(s) = a_{21}(s) x_1(s) + a_{22}(s) x_2(s) > 0$, consequently $x_2(t) > 0$ for $t$ sufficiently close to $s$, $t > s$, $t \in J$.  By continuity, since $x_1(s) > 0$, $x_1(t) > 0$ for $t$ sufficiently close to $s$, $t \in J$.  At any rate, there exists $\tau > s$ such that $x_1(t) > 0$ and $x_2(t) > 0$ for all $t \in (s, \tau)$.  We claim that $\tau = \sup{J}$, that is, $x_1(t) > 0$ and $x_2(t) > 0$ for all $t \in J$, $t > s$.  Indeed, suppose to the contrary that this is not so, that is, there exists $\vartheta > s$ such that $x_1(\vartheta) \le 0$ or $x_2(\vartheta) \le 0$. As the product of the functions $x_1(t)$ and $x_2(t)$ is continuous, it follows from the Intermediate Value Theorem that the set $\{\, t > s: x_1(t) x_2(t) = 0 \,\}$ is nonempty.  Specialize $\tau$ to be the greatest lower bound of this set, and assume, for definiteness, that $x_1(\tau) = 0$ (therefore $x_2(\tau) > 0$). $\tau$ cannot be equal to $s$, because we have already shown that $x_1(t) > 0$ directly to the right of $s$. Consequently $\tau > s$, so $x_1(t) > 0$ for $t < \tau$, $t$ sufficiently close to $\tau$, from which it follows that $x'_1(\tau) \le 0$.  But $x'_1(\tau) = a_{11}(\tau) x_1(\tau) + a_{12}(s) x_2(\tau) > 0$, a contradiction.

We have thus shown that the first column of the matrix $X(t; s)$, has, for all $t > s$, positive entries.  By applying a similar reasoning to the solution of system $x' = A(t) x$ satisfying the initial condition $x(s) = [0 \ \ 1]^{\top}$ we show that the second column of the matrix $X(t; s)$, has, for all $t > s$, positive entries, too.
\end{proof}

The full strength of \ref{thm:strongly-monotone} will be needed only in Section~\ref{sec:continuous}.  In the main part, Section~\ref{sec:construction}, we have matrices independent of time. In such a case we can give an alternative proof, using the theory of matrix exponentials only.
\begin{theorem}
\label{thm:strongly-monotone-nonautonomous}
Let $A \in \mathcal{M}$.  Then $e^{tA} \in \mathcal{P}$ for all $t > 0$.
\end{theorem}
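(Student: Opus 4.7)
The plan is to reduce to a matrix with nonnegative entries via a scalar shift. Specifically, I would pick any real number $\alpha$ satisfying $\alpha \le \min\{a_{11}, a_{22}\}$ (for example $\alpha = \min\{a_{11}, a_{22}\}$, or even $\alpha = 0$ if $A$ already has nonnegative diagonal) and set $B := A - \alpha I$. Then $B$ has nonnegative diagonal entries $a_{ii}-\alpha \ge 0$ and strictly positive off-diagonal entries $b_{12}=a_{12}>0$, $b_{21}=a_{21}>0$. Because $\alpha I$ commutes with $B$, property~\ref{eq:commute} gives
\begin{equation*}
e^{tA} = e^{t(\alpha I + B)} = e^{t\alpha I} e^{tB} = e^{t\alpha}\, e^{tB},
\end{equation*}
and since $e^{t\alpha}>0$, it suffices to prove that $e^{tB} \in \mathcal{P}$ for every $t>0$.

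For this I would return to the definition
\begin{equation*}
e^{tB} = I + tB + \sum_{k=2}^{\infty} \frac{t^{k} B^{k}}{k!}.
\end{equation*}
Since all entries of $B$ are nonnegative, a direct induction shows that every power $B^{k}$ has nonnegative entries (the product of two matrices with nonnegative entries is obviously a matrix with nonnegative entries). Consequently the entire tail series $\sum_{k\ge 2} t^{k} B^{k}/k!$ is a matrix with nonnegative entries whenever $t>0$. It remains to examine the contributions of the first two terms entry by entry: the diagonal entries of $e^{tB}$ are at least $1 + t(a_{ii}-\alpha) \ge 1 > 0$, coming from the identity alone, while the off-diagonal entries are at least $0 + t\, a_{12} > 0$ and $0 + t\, a_{21} > 0$ respectively. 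Combining with the nonnegative tail, all four entries of $e^{tB}$ are strictly positive for $t>0$, so $e^{tB}\in \mathcal{P}$ and therefore $e^{tA}\in\mathcal{P}$.

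I do not foresee a genuine obstacle here: the only ingredient needing care is the commutation relation used to split the exponential, and that is exactly~\ref{eq:commute}; everything else is bookkeeping inside a uniformly convergent series. The reason this proof is shorter than that of Theorem~\ref{thm:strongly-monotone} is that it exploits the autonomous structure, so one can manipulate the series $\sum t^{k}B^{k}/k!$ directly rather than arguing via the intermediate value theorem along solution curves.
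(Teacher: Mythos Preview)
Your proof is correct and follows essentially the same approach as the paper's: a scalar shift (using that $\alpha I$ commutes with everything, via~\ref{eq:commute}) to reduce to a matrix with nonnegative entries, followed by a direct inspection of the exponential series. The only cosmetic difference is that the paper shifts upward by $a = 1 - \min\{a_{11},a_{22}\}$ so as to land in $\mathcal{P}$ outright and then invokes $A^{k}\in\mathcal{P}$, whereas you shift downward to a nonnegative $B$ and extract strict positivity from the first two terms $I + tB$; both variants are equally valid and equally short.
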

\begin{proof}
Assume first that $A \in \mathcal{P}$.  Then $t^{k} A^{k} \in \mathcal{P}$ for all $t > 0$ and $k \in \NN$, consequently $e^{tA} \in \mathcal{P}$.

If $A$ belongs only to $\mathcal{M}$ but not to $\mathcal{P}$, we put $\tilde{A} := aI + A$, where $a := 1 - \min\{a_{11}, a_{22}\}$.  Then $\tilde{A} \in \mathcal{P}$ and, by the previous paragraph, $e^{t \tilde{A}} \in \mathcal{P}$.  As $(aI) A = A (aI)$, there holds $e^{t \tilde{A}} = e^{at} e^{tA}$ (see \ref{eq:commute}), from which it follows immediately that $e^{tA} \in \mathcal{P}$.
\end{proof}

\textit{Remark.}
One could be tempted to use the approach applied in the proof of \ref{thm:strongly-monotone-nonautonomous} in proving \ref{thm:strongly-monotone}.  But this is not so: the obstacle is that $X(t; s)$ for system $x' = A(t) x$ need not be equal to $\exp(\int_{s}^{t} A(\tau) \, d\tau)$.

\subsection{The action of $e^{tA}$ on the unit circle, continued}
\label{subsec:action-on-unit-circle_continued}
In the present subsection we continue the analysis initiated in Subsection~\ref{subsec:action-on-unit-circle}.

Again, the present subsection can be skipped, because it will be helpful only in heuristic considerations why we have chosen such an example.

We assume that $A \in \mathcal{M}$.  As $e^{(t-s)A}$ is the transition matrix of the system $x' = Ax$, \ref{thm:strongly-monotone-nonautonomous} gives that $e^{tA}
\in \mathcal{P}$ for all $t > 0$.

\medskip
Recall that a nonzero $x = [x_1 \ \ x_2] \in \RR^2_{+}$ can be written as $x = r \, [\cos{(\theta)} \ \ \sin{(\theta)}]^{\top} $, where $r = \norm{x} = \sqrt{x \cdot x}$ and $\theta \in [0, \pi/2]$ is given by
\begin{equation*}
\theta =
\begin{cases}
\tan^{-1}{(\frac{x_2}{x_1})} & \text{if } x_1 > 0
\\
\pi/2 & \text{if } x_1 = 0.
\end{cases}
\end{equation*}

We introduce the following notation: $\mathbb{S}_{+} := \mathbb{S} \cap \RR^{2}_{+}$, and $\mathbb{S}_{++} := \mathbb{S} \cap \RR^{2}_{++}$.  Members of $\mathbb{S}_{+}$ can be uniquely written as $y = [\cos{(\theta)} \ \ \sin{(\theta)}]^{\top}$, where $\theta \in [0, \pi/2]$.

For $y \in \mathbb{S}_{+}$ we denote
\begin{equation*}
G(y) := \bigl( A - (A y \cdot y) I \bigr) y.
\end{equation*}
Recall that $y' = G(y)$ is a system of ODEs satisfied by the directions of the solutions of $x' = Ax$ (see \ref{eq:polar-angle}).  We already know (see Subsubsection~\ref{subsubsect:acting-on-directions}) that $G(y)$ is perpendicular to $y$, and that $G(y)$ equals the zero vector if and only if $y$ is the normalized principal eigenvector $u$ of $A$. Otherwise, $G(y)$ is a nonzero vector, pointing either clockwise or counterclockwise.

We check that
\begin{equation*}
G([1 \ \ 0]^{\top}) = [0 \ \ a_{21}]^{\top},
\end{equation*}
so it points counterclockwise, and that
\begin{equation*}
G([0 \ \ 1]^{\top}) = [a_{12} \ \ 0]^{\top},
\end{equation*}
so it points clockwise.

Notice that for any $[y_1 \ \ y_2]^{\top}  \in \mathbb{S}_{+} $, the vector $[y_2 \ \ -\!y_1]^{\top} $ is perpendicular to $[y_1 \ \ y_2]^{\top} $ and points clockwise.  We have thus a simple criterion:
\begin{itemize}
\item
$G(y)$ points clockwise if and only if $G(y) \cdot [y_2 \ \ -\!y_1]^{\top} > 0$,
\item
$G(y)$ points counterclockwise if and only if $G(y) \cdot [y_2 \ \ -\!y_1]^{\top} < 0$.
\end{itemize}

We want to show that for any $y \in \mathbb{S}_{+}$ situated between $[1 \ 0]^{\top}$ and $u$, the vector $G(y)$ points counterclockwise toward $u$, and for any $y \in \mathbb{S}_{+}$ situated between $[0 \ \ 1]^{\top}$ and $u$, the vector $G(y)$ points clockwise toward $u$.

In order to prove that notice first that $[y_2 \ \ -\!y_1]^{\top} = D [y_1 \ \ y_2]^{\top}$, where
\begin{equation*}
D = \begin{bmatrix} 0 & 1 \\
-1 & 0
\end{bmatrix}.
\end{equation*}
Now, the composite function
\begin{equation*}
[0, \pi/2] \ni \theta \mapsto y = [\cos{(\theta)} \ \ \sin{(\theta)}]^{\top} \mapsto G(y) \cdot Dy \in \RR
\end{equation*}
is continuous, takes the value zero only at one $\theta_0 \in (0, \pi/2)$ such that $u = [\cos{(\theta_0)} \ \ \sin{(\theta_0)}]^{\top}$, is positive for $\theta = 0$ and negative for $\theta = \pi/2$. Consequently, it must take positive values for $\theta \in [0, \theta_0)$ and negative values for $\theta \in (\theta_0, \pi/2]$.

\medskip
Consequently, if $A \in \mathcal{M}$ then for any nontrivial solution $x(t)$ of $x' = A x$ such that $x(s) \in \RR^{2}_{+}$ we have the following alternative.
\begin{itemize}
\item
The directions $x(t)/\norm{x(t)}$ are constantly equal to $u$; then $x(t) = {\alpha} e^{{\lambda}_1 t} u$ for some $\alpha > 0$. This occurs when $x(s)/\norm{x(s)} = u$.
\item
For $t > s$ the directions $x(t)/\norm{x(t)}$ tend clockwise to $u$ as $t \to \infty$.  This occurs when $x(s)/\norm{x(s)}$ lies between $[0 \ 1]^{\top}$ and $u$.
\item
For $t > s$ the directions $x(t)/\norm{x(t)}$ tend counterclockwise to $u$ as $t \to \infty$.  This occurs when $x(s)/\norm{x(s)}$ lies between $[1 \ 0]^{\top}$ and $u$.
\end{itemize}

The bottom line is that at each time the directions of the solution tend toward the principal eigenvector.

\section{Construction}
\label{sec:construction}

In the present section we give a construction of a nonautonomous (piecewise constant) planar linear system $x' = A(t)x$ of ODEs such that for each $t \in \RR$ the larger eigenvalue of $A(t)$ equals $-1/2$ but there is a solution not converging to zero as $t \to \infty$.

\subsection{Idea of the construction}
\label{subsec:construction-idea}

We consider a system of linear ODEs
\begin{equation}
\label{eq:main-ex}
x' = A(t) x,
\end{equation}
with $A(t)$ defined as
\begin{equation*}
A(t) :=
\begin{cases}
A^{(1)} & \quad t \in [2k,2k+1), \\
A^{(2)} & \quad t \in [2k+1,2k+2),
\end{cases}
\quad k \in \ZZ,
\end{equation*}
where $A^{(1)}, A^{(2)}$ are $2$ by $2$ matrices.

Notice that $A(t)$ has discontinuity points at integers.

A solution of the system~\ref{eq:main-ex} is defined in the following way:  It is a continuous function $\xi \colon \RR \to \RR^2$ such that
\begin{itemize}
\item
$\xi'(t) = A^{(1)} \xi(t)$, $t \in (2k, 2k+1)$, $k \in \ZZ$;
\item
$\xi'(t) = A^{(2)} \xi(t)$, $t \in (2k+1, 2k+2)$, $k \in \ZZ$;
\item
$\xi'_{-}(2k) = A^{(2)} \xi(2k)$, $\xi'_{+}(2k) = A^{(1)} \xi(2k)$, for any $k \in \ZZ$;
\item
$\xi'_{-}(2k+1) = A^{(1)} \xi(2k+1)$, $\xi'_{+}(2k) = A^{(1)} \xi(2k+1)$, for any $k \in \ZZ$.
\end{itemize}
It is straightforward to see that for any $s \in \RR$ and any $x_0 \in \RR^2$ there exists a unique solution $x(t; s, x_0)$ of \ref{eq:main-ex} satisfying the initial condition $x(s) = x_0$.  Further, we can define the transition matrix $X(t;s)$ as
\begin{equation*}
X(t;s) x_0 = x(t; s, x_0).
\end{equation*}
The transition matrix has all the properties mentioned earlier, in Subsection~\ref{subsec:ODE-systems}:
\begin{enumerate}
\item
$X(s; s) = I$, for any $s \in J$;
\item
$X(r;s) = X(r; t) X(t; s)$, for any $s, t, r \in \RR$;
\item
$X^{-1}(t;s) = X(s; t)$, for any $s, t \in \RR$.
\item
$\displaystyle \frac{\partial}{\partial t} X(t;s) = A(t) X(t;s)$,
for any $s, t \in J$,
\end{enumerate}
except that at $t$ or $s$ being integers its one-sided derivatives satisfy the suitable equalities.

Observe that on a time interval $J$ not containing an integer in its interior a solution of \ref{eq:main-ex} satisfies either the system
\begin{equation*}
x' = A^{(1)} x
\end{equation*}
(when $J \subset [2k, 2k+1]$), or the system
\begin{equation*}
x' = A^{(2)} x
\end{equation*}
(when $J \subset [2k+1, 2k+2]$).  Now, an application of \ref{eq:transition-autonomous} gives that
\begin{equation*}
X(t;s) =
\begin{cases}
\exp\bigl((t-s) A^{(1)}\bigr) & \quad \text{for } s, t \in [2k, 2k+1]
\\
\exp\bigl((t-s) A^{(2)}\bigr) & \quad \text{for } s, t \in [2k+1, 2k+2]
\end{cases}
\end{equation*}

When we restrict ourselves to the interval $[0, 2]$, we have
\begin{equation*}
X(t;0) = \begin{cases}
\exp\bigl(t A^{(1)}\bigr) & \quad \text{for } t \in [0, 1]
\\
\exp\bigl((t - 1) A^{(2)}\bigr) \exp{A^{(1)}} & \quad \text{for } t \in (1, 2]
\end{cases}.
\end{equation*}

As the matrix function $A(t)$ is periodic with period $2$ we have
\begin{equation}
\label{eq:periodic}
X(2k + 2; 2k) = X(2; 0)
\end{equation}
for any $k \in \ZZ$.

We denote
\begin{equation*}
P = X(2; 0) = e^{A^{(2)}} e^{A^{(1)}}.
\end{equation*}
(The letter $P$ stands for Poincar\'e: indeed, $P$ is the Poincar\'e (period) map of the time-periodic system~\ref{eq:main-ex}.)  As a consequence of~\ref{eq:periodic} we obtain
\begin{equation}
\label{eq:periodic-1}
X(2k; 0) = X(2; 0)^k = P^k
\end{equation}
for any $k = 1, 2, 3, \dots$.

\begin{remark}
{\em It can be proved that
\begin{equation*}
X(t; s) = X(t + 2k; s + 2k), \qquad s, t \in \RR, \ k \in \ZZ.
\end{equation*}
We will not need, however, the above equality in its full
generality.}
\end{remark}

From now on, we assume that $A^{(1)}$ and $A^{(2)}$ belong to $\mathcal{M}$ (recall that $\mathcal{M}$ stands for the set of $2 \times 2$ matrices with positive off-diagonal entries).

Our program is to find two matrices, $A^{(1)}, A^{(2)} \in \mathcal{M}$, such that their principal eigenvalues are negative, yet the set of those $y \in \mathbb{S}_{++}$ for which $A^{(1)} y \cdot y > 0$ and $A^{(2)} y \cdot y > 0$ is large.  Indeed, then it is quite likely that for some solution $x(t)$ the directions $x(t)/\norm{x(t)}$ will be in that set for quite a large fraction of time (or, which would be the best, always), so the magnitude of that solution grows from time $t = 0$ to time $t = 2$ (and, by periodicity, it must grow to infinity as time goes to infinity).

Where to look for such matrices?  Certainly not among symmetric (Hermitian) matrices, since for such matrices one can prove quite easily that, if the principal eigenvalue of $A$ is negative then $Ay \cdot y < 0$ for all nonzero $y$.  So, a matrix should be far from symmetric.

\subsection{Definition of $A^{(1)}$ and $A^{(2)}$}
\label{subsec:definitions}

We define
\begin{equation*}
A^{(1)} :=
\left[ \begin{array}{cc}
-1 & c
\\[0.5ex]
\dfrac{1}{4c} & -1
\end{array} \right]
\quad \text{and} \quad
A^{(2)} :=
\left[ \begin{array}{cc}
-1 & \dfrac{1}{4c}
\\[1.5ex]
c & -1
\end{array} \right]~.
\end{equation*}
parameterized by a parameter $c > 0$ ($c$ will be taken to be large).  Observe that the larger $c$ is the farther from symmetric the matrices $A^{(1)}$ and $A^{(2)}$ are.

It is easy to see that the eigenvalues of the matrices $A^{(1)}$ and $A^{(2)}$ are $-1/2$ and $-3/2$.

$[1 \ \ \tfrac{1}{2c}]^{\top}$ is an eigenvector of $A^{(1)}$ corresponding to the principal eigenvalue $-1/2$, and $[1 \ \ -\! \tfrac{1}{2c}]^{\top}$ is an eigenvector of $A^{(1)}$ corresponding to the other eigenvalue $-3/2$.

Similarly, $[\tfrac{1}{2c} \ \ 1]^{\top}$ is an eigenvector of $A^{(2)}$ corresponding to the principal eigenvalue $-1/2$, and $[- \tfrac{1}{2c} \ \ 1]^{\top}$ is an eigenvector of $A^{(2)}$ corresponding to the other eigenvalue $-3/2$.

Denote by $u^{(1)}$ the normalized principal eigenvector of $A^{(1)}$,
\begin{equation*}
u^{(1)} = \left[\frac{2c}{\sqrt{1 + 4 c^2}} \ \ \frac{1}{\sqrt{1 + 4 c^2}} \right]^{\top},
\end{equation*}
and $u^{(2)}$ the normalized principal eigenvector of $A^{(2)}$,
\begin{equation*}
u^{(2)} = \left[\frac{1}{\sqrt{1 + 4 c^2}} \ \ \frac{2c}{\sqrt{1 + 4 c^2}} \right]^{\top}.
\end{equation*}

\subsubsection{Why could the example be O.K.?}
\label{subsubsec:heuristics}

We want to show that, under the choice of the matrices $A^{(1)}$ and $A^{(2)}$ as in the previous subsection, it is very likely that there are plenty of solutions $x(t)$ such that their length tends exponentially fast to infinity.

In order to do that, let us look at the set of those $y \in \mathbb{S}_{++}$ such that $A^{(1)} y \cdot y > 0$.  As the matrix $A^{(2)}$ is the transpose of $A^{(1)}$, that set will be equal to the set of those $y \in \mathbb{S}_{++}$ such that $A^{(2)} y \cdot y > 0$.

We have
\begin{multline*}
A^{(1)} y \cdot y = (A^{(1)} [y_1 \ \ y_2]^{\top})^{\top} [y_1 \ \ y_2]^{\top}
\\
= [y_1 \ \ y_2] (A^{(1)})^{\top} [y_1 \ \ y_2]^{\top} = - (y_1)^2 - (y_2)^2 + \bigl( c + \tfrac{1}{4c} \bigr) y_1 y_2.
\end{multline*}
By writing $y \in \mathbb{S}_{++}$ in polar coordinates as $[\cos(\theta) \ \ \sin(\theta)]^{\top}$,  $\theta \in (0, \pi/2)$, we obtain that
\begin{equation*}
A^{(1)} [\cos(\theta) \ \ \sin(\theta)]^{\top} \cdot [\cos(\theta) \ \ \sin(\theta)]^{\top} = -1 + \tfrac{1}{2} \bigl(c + \tfrac{1}{4c} \bigr) \sin{(2\theta)}.
\end{equation*}
After simple calculation we get $A^{(1)} y \cdot y > 0$ if and only if
\begin{equation*}
y = [\cos(\theta) \ \ \sin(\theta)]^{\top}, \quad \text{where }\theta \in \Bigl( \tfrac{1}{2} \sin^{-1}{(\tfrac{8c}{4 c^2 + 1})}, \tfrac{\pi}{2} - \tfrac{1}{2} \sin^{-1}{(\tfrac{8c}{4 c^2 + 1})} \Bigr),
\end{equation*}
provided that $c > 1 + \frac{\sqrt{3}}{2}$.

Now let us apply the knowledge of how the directions of a solution change, as formulated in Subsection~\ref{subsec:action-on-unit-circle_continued}.  Assume that the initial value $x(0)$ is situated somewhere between the principal eigenvectors for $A^{(1)}$ and $A^{(2)}$.  Recall that at each moment the direction tends toward the normalized principal eigenvector at that moment, so from time $t = 0$ to time $t = 1$ the directions tend clockwise toward the normalized principal eigenvector $u^{(1)}$ of $A^{(1)}$.  They can leave the ``red'' set, but again from time $t = 1$ to time $t = 2$ they tend counterclockwise toward the normalized principal eigenvector $u^{(2)}$ of $A^{(2)}$.  By periodicity, the directions oscillate.

\subsubsection{Analysis}
\label{subsubsec:analysis}

The reasoning given in the previous subsubsection cannot be considered a formal proof.  Now we give an analytical solution.

Observe that for the instability it is enough to find \emph{one} solution such that for some sequence of time moments its lengths tend to infinity.

\smallskip
How to look for such a solution?

By \ref{thm:strongly-monotone}, both matrices $e^{A^{(2)}}$ and $e^{A^{(1)}}$ belong to $\mathcal{P}$, their product, that is, $P$, belongs to $\mathcal{P}$, too.  \ref{prop:principal} states that there exists precisely one normalized  principal eigenvector $w$ of $P$ pertaining to the principal eigenvalue, $\mu$, of $P$.

Denote by $w(t)$ the solution of system~\ref{eq:main-ex} taking value $w$ at time $t = 0$.  Since $w$ is an eigenvector of $P$ corresponding to $\mu$ and since, by \ref{eq:periodic-1}, $w(2n) = X(2n;0) w = P^{n} w$ for all $n = 1, 2, \dots$, we have
\begin{equation*}
w(2n) = {\mu}^n w, \quad n = 1, 2, \dots.
\end{equation*}
So it is sufficient to check that the principal eigenvalue $\mu$ of $P$ is larger than one.

\begin{remark}
{\em The Floquet theory \cite{C} states that there is a decomposition
\begin{equation*}
X(t; 0) = Q(t) e^{tR}, \quad t \in \RR,
\end{equation*}
where $Q(t)$ is a time-periodic matrix function (with period $2$) and $R$ is a constant (in~general, complex) matrix. The eigenvalues of $e^{2R}$ are called\/} characteristic multipliers {\em of $x' = A(t) x$, and a $\nu \in \CC$ such that $e^{2 \nu}$ is a characteristic multiplier is called a} Floquet exponent {\em of $x' = A(t) x$.  Generally, Floquet exponents are not defined uniquely.  But in our case $\mu$ is the (positive real) characteristic multiplier, larger than the other one, and its natural logarithm can be called the} principal Floquet exponent {\em of $x' = A(t) x$.}
\end{remark}

We proceed now to computing (or, rather, estimating from below) $\mu$.  We give two alternative proofs:  first, by explicitly computing the matrix $P$, and second, by giving an approximation of $P$ via a partial sum of the Peano--Baker series and showing that ignoring higher\nobreakdash-\hspace{0pt}order terms suffices for the relevant conclusion.

\bigskip
{\em 1. Direct computing.\/}

The exponential of $A^{(1)}$ is given by the formula
\begin{equation*}
\exp(t A^{(1)}) = e^{-t}
\begin{bmatrix}
\cosh(\frac{t}{2}) & 2c \sinh(\frac{t}{2}) \\
\frac{1}{2c} \sinh(\frac{t}{2}) & \cosh(\frac{t}{2})
\end{bmatrix}.
\end{equation*}
One can find this formula by some Computer Algebra System.  However, we prefer to give a more analytical explanation.

We write $A^{(1)} = -I + B$, where
\begin{equation*}
B = \begin{bmatrix}
0 & c \\
\frac{1}{4c} & 0
\end{bmatrix}.
\end{equation*}
Since $(-I) B = B (-I)$, we can write $e^{t A^{(1)}} = e^{t (-I)} e^{tB}$ (see \ref{eq:commute}).  We easily get $e^{t (-I)} = e^{-t}I$.  So, the problem boils down to finding $e^{tB}$.

We observe that $B^2 = \tfrac{1}{4}I$.  Consequently,
\begin{equation*}
B^{3} = \frac{1}{4} B, \quad B^{5} = \frac{1}{16} B,
\end{equation*}
and generally
\begin{equation*}
B^{2k} = \frac{1}{2^{2k}} I, \quad B^{2k+1} = \frac{1}{2^{2k}} B.
\end{equation*}
We can write
\begin{gather*}
e^{tB} = I + \frac{tB}{1!} + \frac{t^2 B^2}{2!} + \frac{t^3 B^3}{3!} + \frac{t^4 B^4}{4!} + \frac{t^5 B^5}{5!} + \dots
\\
= \left( 1 + \frac{1}{2!} \Bigl( \frac{t}{2} \Bigr)^2 + \frac{1}{4!} \Bigl( \frac{t}{2} \Bigr)^4 + \dots \right) I
\\
+ 2 \left(\frac{1}{1!} \frac{t}{2} + \frac{1}{3!} \Bigl( \frac{t}{2} \Bigr)^3 + \frac{1}{5!} \Bigl( \frac{t}{2} \Bigr)^5 + \dots \right) B,
\end{gather*}
which is easily seen, by comparing the Maclaurin series expansions, to be equal to $(\cosh(\tfrac{t}{2})) I + (2 \sinh(\tfrac{t}{2})) B$.

\smallskip
Similarly we have
\begin{equation*}
\exp(t A^{(2)}) = e^{-t}
\begin{bmatrix}
\cosh(\frac{t}{2}) & \frac{1}{2c} \sinh(\frac{t}{2})
\\
2c \sinh(\frac{t}{2}) & \cosh(\frac{t}{2})
\end{bmatrix}.
\end{equation*}

\vskip3mm

One has
\begin{equation*}
\begin{aligned}
P = e^{A^{(2)}} e^{A^{(1)}} = {} & e^{-1}
\begin{bmatrix}
\cosh(\frac{1}{2}) & \frac{1}{2c} \sinh(\frac{1}{2})
\\[2ex]
2c \sinh(\frac{1}{2}) & \cosh(\frac{1}{2})
\end{bmatrix}
e^{-1}
\begin{bmatrix}
\cosh(\frac{1}{2}) & 2c \sinh(\frac{1}{2})
\\
\frac{1}{2c} \sinh(\frac{1}{2}) & \cosh(\frac{1}{2})
\end{bmatrix}
\\
= {} & e^{-2}
\begin{bmatrix}
\coshsq{\frac{1}{2}} + \frac{1}{4c^2} \sinhsq(\frac{1}{2}) & (2c + \frac{1}{2c}) \cosh(\frac{1}{2}) \, \sinh(\frac{1}{2})
\\[1ex]
(2c + \frac{1}{2c}) \cosh(\frac{1}{2}) \, \sinh(\frac{1}{2}) & \coshsq(\frac{1}{2}) + 4c^2 \sinhsq(\frac{1}{2})
\end{bmatrix}.
\end{aligned}
\end{equation*}
The principal eigenvalue of the last matrix is, by \ref{prop:principal}(iii), bigger than $\coshsq(\frac{1}{2}) + 4 c^2 \sinhsq(\frac{1}{2})$. As $\sinhsq(\frac{1}{2}) > 0$, we need only to take $c > 0$ so large that the last expression is bigger than $e^2$.

Numerical calculation gives that $\coshsq(\frac{1}{2}) + 4 c^2 \sinhsq(\frac{1}{2}) > e^2$ when $c > 2.37323$, whereas the principal eigenvalue of $P$ is $> 1$ when $c > 2.13834$.

\bigskip
{\em 2. Peano--Baker series.\/}
Another way of estimating $\mu$ is by means of the Peano--Baker series.  For an easily readable background on the Peano--Baker series, see~\cite{BSch}.

To be more specific, we shall consider the system
\begin{equation}
\label{eq:shifted}
\tilde{x}' = B(t) \tilde{x},
\end{equation}
where, for any $t \in \RR$,  $B(t) = A(t) + I$.  In other words,
\begin{equation*}
B(t) =
\begin{cases}
B^{(1)} & \quad t \in [2k,2k+1),
\\
B^{(2)} & \quad t \in [2k+1,2k+2),
\end{cases}
\quad k \in \ZZ,
\end{equation*}
with
\begin{equation*}
B^{(1)} = \begin{bmatrix}
0 & c \\
\frac{1}{4c} & 0
\end{bmatrix}, \quad
B^{(2)} = \begin{bmatrix}
0 & \frac{1}{4c} \\
c & 0
\end{bmatrix}.
\end{equation*}
Let $\tilde{X}(t;s)$ stand for the transition matrix for~\ref{eq:shifted}:  for $t \in \RR$, $\tilde{X}(t;s)x_0$ denotes the value at time $t$ of the solution of~\ref{eq:shifted} taking the value $x_0$ at $s$.

Since
\begin{equation*}
\tilde{X}(t;s) = e^{t - s} X(t; s),
\end{equation*}
and we are interested in the principal eigenvalue, $\mu$, of $P = X(2; 0)$ being larger than $1$, we will be done if we can show that the principal eigenvalue of $\tilde{X}(2; 0) = e^{2} P$ is larger than $e^2$.

The Peano--Baker series is given by the formula
\begin{equation}
\label{eq:PB-series}
\tilde{X}(t; 0) = \sum\limits_{k = 0}^{\infty} J_k(t; 0), \quad t \ge 0,
\end{equation}
where
\begin{equation*}
J_0(t; 0) = I, \quad J_{k + 1}(t; 0) = \int\limits_{0}^{t} B(\tau) J_{k}(\tau; 0) \, d\tau.
\end{equation*}
(The reader knowing the Picard iteration will observe that the above is just the Picard iteration formula for the matrix ordinary differential equation $\tilde{X}' = B(t) \tilde{X}$ with the initial condition $\tilde{X}(0) = I$.)

Under our assumptions on $B(t)$, the above series converges, at $t = 2$, to $\tilde{X}(2; 0)$ (see~\cite[Thm.~1]{BSch}).

Let us write first several terms of the Peano--Baker series~\ref{eq:PB-series}, at $t = 2$,
\begin{multline*}
\tilde{X}(2; 0) = I + \int\limits_{0}^{2} B(t_1) \, dt_1 + \int\limits_{0}^{2} B(t_1) \biggl( \int\limits_{0}^{t_1} B(t_2) \, dt_2 \biggr) \, dt_1
\\
+ \int\limits_{0}^{2} B(t_1) \biggl( \int\limits_{0}^{t_1} B(t_2) \biggl( \int\limits_{0}^{t_2} B(t_3) \, dt_3 \biggr) \, dt_2 \biggr) \, dt_1 + \dots.
\end{multline*}

We have
\begin{multline*}
J_{0}(2; 0) + J_{1}(2; 0) = I +\int\limits_{0}^{2} B(\tau) \, d\tau = I + \int\limits_{0}^{1} B(\tau) \, d\tau + \int\limits_{1}^{2} B(\tau) \, d\tau
\\
= I + B^{(1)} + B^{(2)} =
\begin{bmatrix}
1 & c + \frac{1}{4c}
\\[0.5ex]
c + \frac{1}{4c} & 1
\end{bmatrix},
\end{multline*}
The largest (that is, the principal) eigenvalue of the above matrix is easily seen to be $1 + c + \frac{1}{4c}$.  And just as easily we can see that for $c > 0$ sufficiently large (for $c > \tfrac{1}{2}(e^2 - 1) + \tfrac{1}{2} \sqrt{e^4 - 2 e^2}$) that largest eigenvalue is bigger than $e^2$. (Numerical calculation gives that its suffices to have $c > 6.34968$.)

But what about the remaining terms in the Peano--Baker series?  Indeed, adding them cannot make our estimates worse:  since the entries of the matrices $B^{(1)}$ and $B^{(2)}$ are nonnegative, the integrals occurring in the definitions of higher order terms are also matrices with nonnegative entries, so, by \ref{lm:monotone-dependence}, the principal eigenvalue of the matrix $\tilde{X}(2; 0)$ is not less than $1 + c + \frac{1}{4c}$.

We remark here in~passing that our choice of $\tilde{X}(2; 0)$ rather than $X(2; 0)$ is due to the fact that the diagonal terms of the matrices $A(t)$ are negative, which would make the reasoning as in the above paragraph hardly possible.

\bigskip
{\em 3. Comparison of both methods.\/}
The direct computation gives an explicit form of the matrix $P$, so its principal eigenvalue can be calculated.

One of the advantages of the Peano--Baker series is that it is very versatile: due to the monotone dependence of the principal eigenvalue on the entries of the matrices, one needs only to find the second term in the series, and this reduces to integration.  This is of importance when one wants to construct other, more complicated, examples.

\bigskip
{\em 3. Magnus expansion.\/}
There is still another method of solving a nonautonomous linear system of ordinary differential equations, namely the Magnus expansion, which rests on representing the transition matrix as the exponential of some series composed of integrals of nested matrix commutators.   For more on the classical Magnus expansion, as well as its extensions, like the Floquet--Magnus expansion, see the review paper~\cite{B-C-O-R}, see also~\cite{I}.

It seems that it is a challenging task to apply the Magnus expansion to obtain results as in the present paper.

\section{Continuous matrix function}
\label{sec:continuous}
In contrast to the previous parts, reading the present section requires of the reader having experienced more exposure to ``harder'' mathematical thinking.

One could think that perhaps a phenomenon described above has something to do with the discontinuity at integer times.  Results contained in the present section show that this is not so.

\medskip
We start by recalling that, if for a matrix $C = [c_{ij}]_{i,j=1}^2 \in \RR^{2 \times 2}$ we denote its {\em Euclidean norm\/} as
\begin{equation*}
\norm{C} = \biggl( \sum\limits_{i, j = 1}^{2} (c_{ij})^2 \biggr)^{1/2},
\end{equation*}
then for any $C, D \in \RR^{2 \times 2}$ there holds
\begin{equation}
\label{eq:matrix-norm}
\norm{C D} \le \norm{C} \, \norm{D}
\end{equation}
(see, e.g., \cite[5.6]{HJ}).

Another fact is the {\em Gronwall inequality} (\cite[17.3]{HSD}):
\begin{lemma}
\label{lm:Gronwall}
Assume that $\alpha, \beta \ge 0$ and $f(t)$ is a continuous nonnegative function defined on $[a, b]$ such that
\begin{equation*}
f(t) \le \alpha + \beta \int\limits_{a}^{t} f(\tau) \, d\tau, \qquad
t \in [a, b].
\end{equation*}
Then
\begin{equation*}
f(t) \le \alpha e^{{\beta}(t-a)}, \qquad t \in [a, b].
\end{equation*}
\end{lemma}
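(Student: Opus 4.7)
The plan is to bootstrap the integral bound on $f$ into a pointwise exponential bound by studying the right-hand side as an auxiliary function and applying a standard integrating factor. Specifically, I would set
\begin{equation*}
g(t) := \alpha + \beta \int_{a}^{t} f(\tau)\, d\tau, \qquad t \in [a,b],
\end{equation*}
so that the hypothesis reads simply $f(t) \le g(t)$. Since $f$ is continuous, $g$ is differentiable on $[a,b]$ with $g'(t) = \beta f(t)$, and using $f(t) \le g(t)$ together with $\beta \ge 0$ one gets the differential inequality $g'(t) \le \beta g(t)$. The initial value is $g(a) = \alpha$.

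Next I would convert this differential inequality into a monotonicity statement by the integrating factor $e^{-\beta(t-a)}$. Namely, multiplying $g'(t) - \beta g(t) \le 0$ through by the (strictly positive) factor $e^{-\beta(t-a)}$ and recognizing the product rule gives
\begin{equation*}
\frac{d}{dt}\bigl( e^{-\beta(t-a)} g(t) \bigr) \le 0, \qquad t \in [a,b].
\end{equation*}
Hence $t \mapsto e^{-\beta(t-a)} g(t)$ is nonincreasing on $[a,b]$, so $e^{-\beta(t-a)} g(t) \le g(a) = \alpha$ for every $t \in [a,b]$. Rearranging, $g(t) \le \alpha e^{\beta(t-a)}$, and combining with $f(t) \le g(t)$ yields the desired conclusion.

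There is essentially no hard step; the only points where one has to be a bit careful are boundary cases. If $\beta = 0$ the integrating factor is trivial but the argument still produces $g(t) \le \alpha$, which is exactly the claimed bound. If $\alpha = 0$ the argument gives $g(t) \le 0$; since $f \ge 0$ forces $g \ge 0$, we then get $g \equiv 0$, hence $f \equiv 0$, again consistent with the stated inequality. The one genuine subtlety worth flagging is that the differentiation of $g$ requires continuity of $f$ (to apply the fundamental theorem of calculus pointwise), which is exactly the hypothesis made in the lemma, so nothing extra is needed.
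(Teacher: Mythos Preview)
Your argument is correct and is the standard integrating-factor proof of the Gronwall inequality. Note, however, that the paper does not actually supply a proof of this lemma: it merely states the result and cites \cite[17.3]{HSD}, so there is nothing in the paper to compare your approach against. Your write-up stands on its own as a clean self-contained justification.
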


We proceed now to the construction.

For $s \in [0,1]$ put
\begin{equation*}
\tilde{A}(s) :=
\begin{bmatrix}
-1 & (1-s)c +  \frac{s}{4c}
\\
sc + \frac{1-s}{4c}  & -1
\end{bmatrix},
\end{equation*}
where $c > 0$.

\smallskip
Denote by $\tilde{\lambda}(s)$ the principal eigenvalue of $\tilde{A}(s)$, and put $\bar{A}(s) := \tilde{A}(s) - (\tilde{\lambda}(s) + \frac{1}{2}) I$.  It is easily seen that the normalized principal eigenvector, $u_s$, of $\bar{A}(s)$ is an eigenvector of $\bar{A}(s)$ pertaining to the eigenvalue $\tilde{\lambda}(s) - (\tilde{\lambda}(s) + \frac{1}{2}) = - \frac{1}{2}$.  As $\bar{A}(s)$ belongs to $\mathcal{M}$, $- \frac{1}{2}$ must be therefore its principal eigenvalue.

Observe that
\begin{equation*}
\bar{A}(0) = A^{(1)}, \quad \bar{A}(1) = A^{(2)},
\end{equation*}
where $A^{(1)}$ and $A^{(2)}$ are as in Section~\ref{sec:construction}.

For $\epsilon \in (0, 1/4)$ we define a matrix function $A_{\epsilon} \colon [0,2] \to \mathcal{M}$ by the formula
\begin{equation*}
A_{\epsilon}(t) =
\begin{cases}
\bar{A}\bigl(\frac{1}{2} - \frac{t}{2\epsilon}\bigr) & \text{ for } t \in
[0,\epsilon]
\\
A^{(1)} & \text{ for } t \in [\epsilon, 1 - \epsilon]
\\
\bar{A}\bigl(\frac{t-1}{2\epsilon} + \frac{1}{2}\bigr) & \text{ for } t \in
[1 - \epsilon, 1 + \epsilon]
\\
A^{(2)} & \text{ for } t \in [1 + \epsilon, 2 - \epsilon]
\\
\bar{A}\bigl(\frac{2-t}{2\epsilon} + \frac{1}{2}\bigr) & \text{ for } t \in [2 -
\epsilon, 2].
\end{cases}
\end{equation*}
The function $A_{\epsilon}$ is continuous, and the principal eigenvalue of $A_{\epsilon}(t)$ is constantly equal to $-1/2$.  We extend the matrix function $A_{\epsilon}$ to the whole of $\RR$ by periodicity (with period $2$).

\smallskip
Let $M := \sup\{\, \norm{\bar{A}(s)}: s \in [0,2] \,\}$.

\medskip
Denote by $X_{\epsilon}(t;s)$ the transition matrix for the system $x' = A_{\epsilon}(t) x$:  $X_{\epsilon}(t;s) x_0$ is the solution of the initial value problem
\begin{equation*}
\begin{cases}
x' = A_{\epsilon}(t) x
\\
x(s) = x_0.
\end{cases}
\end{equation*}

We want to show that the matrices $X_{\epsilon}(2;0)$ converge (entrywise), as $\epsilon \to 0^{+}$, to the matrix $X(2; 0)$ as in Section~\ref{sec:construction}.  In~fact, this is a special case of the continuous dependence of solutions of the initial value problem on parameters, as presented, for~example, in~\cite[Chapter 17]{HSD}, but we prefer to give its (simple) proof here.

By integrating the relevant equations for the transition matrix we see that
\begin{equation}
\label{eq:integral}
X_{\epsilon}(t;s) = I + \int\limits_{s}^{t} A_{\epsilon}(\tau) X_{\epsilon}(\tau; s) \, d\tau, \quad X(t;s) = I + \int\limits_{s}^{t} A(\tau) X(\tau; s) \, d\tau
\end{equation}
for any $s, t \in \RR$.

Consequently, with the help of standard estimates of integrals, together with \ref{eq:matrix-norm}, we obtain
\begin{equation*}
\begin{aligned}
\norm{X_{\epsilon}(t;s)} & \le 1 + \int\limits_{s}^{t} \norm{A_{\epsilon}(\tau)} \, \norm{X_{\epsilon}(\tau; s)} \, d\tau \le 1 + M \int\limits_{s}^{t} \norm{X_{\epsilon}(\tau; s)} \, d\tau, \quad s \le t
\\
\norm{X(t;s)} & \le 1 + \int\limits_{s}^{t} \norm{A(\tau)} \, \norm{X(\tau; s)} \, d\tau \le 1 + M \int\limits_{s}^{t} \norm{X(\tau; s)} \, d\tau, \quad s \le t.
\end{aligned}
\end{equation*}
An application of the Gronwall inequality gives that
\begin{equation}
\label{eq1}
\norm{X_{\epsilon}(t;0)} \le e^{Mt} \text{ and } \norm{X(t;0)} \le e^{Mt} \quad \text{for } t \in [0,2].
\end{equation}

Rearranging \ref{eq:integral} we obtain
\begin{equation*}
X_{\epsilon}(t; 0) - X(t; 0) = \int\limits_{0}^{t} A_{\epsilon}(\tau) \bigl(X_{\epsilon}(\tau; 0) - X(\tau; 0)\bigr) \, d\tau + \int\limits_{0}^{t} \bigl(A_{\epsilon}(\tau) - A(\tau)\bigr) X(\tau; 0) \, d\tau,
\end{equation*}
consequently
\begin{equation*}
\norm{X_{\epsilon}(t;0) - X(t; 0)} \le \int\limits_{0}^{t} \norm{A_{\epsilon}(\tau)} \, \norm{X_{\epsilon}(\tau; 0) - X(\tau; 0)} \, d\tau + \int\limits_{0}^{t} \norm{A_{\epsilon}(\tau) - A(\tau)} \, \norm{X(\tau; 0)} \, d\tau.
\end{equation*}
Taking into account that $\norm{A_{\epsilon}(\tau)} \le M$ and $\norm{X(\tau; 0)} \le e^{M \tau}$ we obtain that
\begin{equation*}
\norm{X_{\epsilon}(t;0) - X(t; 0)} \le M \int\limits_{0}^{t} \norm{X_{\epsilon}(\tau; 0) - X(\tau; 0)} \, d\tau + e^{2 M} \int\limits_{0}^{t} \norm{A_{\epsilon}(\tau) - A(\tau)} \, d\tau
\end{equation*}
for $t \in [0,2]$.

Applying the Gronwall inequality once more gives that
\begin{equation*}
\norm{X_{\epsilon}(t;0) - X(t; 0)} \le e^{2 M} e^{Mt} \int\limits_{0}^{2} \norm{A_{\epsilon}(\tau) - A(\tau)} \, d\tau, \quad t \in [0, 2],
\end{equation*}
in~particular
\begin{equation*}
\norm{X_{\epsilon}(2;0) - X(2;0)} \le e^{4 M} \int\limits_{0}^{2} \norm{A_{\epsilon}(\tau) - A(\tau)} \, d\tau.
\end{equation*}
By construction,
\begin{equation*}
\int\limits_{0}^{2} \norm{A_{\epsilon}(\tau) - A(\tau)} \, d\tau \le 8 M \epsilon,
\end{equation*}
consequently
\begin{equation*}
\norm{X_{\epsilon}(2;0) - X(2;0)} \le 8 M e^{4 M} \epsilon.
\end{equation*}

Observe that it follows from the above inequality that, as $\epsilon \to 0$, all the entries of $X_{\epsilon}(2;0)$ converge to the corresponding entries of $X(2;0) = P$.  As, by \ref{thm:strongly-monotone}, $X_{\epsilon}(2;0)$ belong to $\mathcal{P}$, \ref{prop:principal} implies that the principal eigenvalues, $\mu_{\epsilon}$, of $X_{\epsilon}(2;0)$ converge to the principal eigenvalue, $\mu$, of $P$, which is $> 1$. Consequently, for $\epsilon > 0$ sufficiently close to zero the principal eigenvalue of $X_{\epsilon}(2;0)$ is $> 1$.  It suffices now to take the solution of $x' = A_{\epsilon}(t) x$ taking the value $w_{\epsilon}$ at $t = 0$, where $w_{\epsilon}$ is the normalized principal eigenvector of $X_{\epsilon}(2;0)$.

\subsection{Smoother time dependence}
\label{subsec:smooth}
Repeating an argument from Section~\ref{sec:continuous} we can further approximate continuous matrix functions $A(t)$ by matrix functions that are smooth, for example $C^1$, or even $C^{\infty}$.

\section{Extensions of results}
\label{sec:extensions}

Our construction, whether in Section~\ref{sec:construction} or in Section~\ref{sec:continuous}, apparently gives only one unstable solution.

In~reality, however, one can prove, without much effort, more:
\begin{itemize}
\item
Not only $w(k) = {\mu}^k$ for all $k \in \NN$, but also
\begin{equation*}
\lim\limits_{t \to \infty} \frac{\ln\norm{w(t)}}{t} = \mu.
\end{equation*}
\item
$w(t)$ is by~far not the only solution possessing the above property.  Indeed, if $x(t)$ denotes a nontrivial solution such that its initial value, $x(0)$, is in $\RR^2_{+}$, then the directions $x(t)/\norm{x(t)}$ converge, as $t \to \infty$, to the directions $w(t)/\norm{w(t)}$, that is,
\begin{equation*}
\lim\limits_{t \to \infty} \Biggl\lVert \frac{x(t)}{\norm{x(t)}} - \frac{w(t)}{\norm{w(t)}} \Biggl\lVert = 0
\end{equation*}
(and the exponential rate of convergence is equal to half the natural logarithm of the second eigenvalue of the transition matrix $X(2;0)$), from which it follows that
\begin{equation*}
\lim\limits_{t \to \infty} \frac{\ln\norm{x(t)}}{t} = \mu
\end{equation*}
holds for such $x(t)$, too.
\end{itemize}

The above could be a material for undergraduate work.

\section{Non-periodic systems}
\label{sec:non-periodic}
The present section is independent of Section~\ref{sec:continuous}.  Its reading requires the knowledge of basic calculus only.

A natural question appears: Can one construct a nonautonomous strongly cooperative system that is not periodic in time, but which exhibits the phenomenon as in Section~\ref{sec:construction}?

Recall that, in the time-periodic situation, the analysis of the Poincar\'e map is a powerful (and, simultaneously, simple) tool to draw conclusions regarding the (in)stability of a strongly cooperative system of ODEs.  Indeed, in Section~\ref{sec:construction} it suffices to check that the principal eigenvalue of a (linear) Poincar\'e map (the principal Floquet exponent) is larger than one.  The outlook changes dramatically when we take a next step in generalization, that is, we consider systems that are not periodic in time: there is a theory of the principal spectrum/principal Lyapunov exponent (for a survey see Part IV of \cite{JM}, as well as the references contained therein), but it is quite involved (even for almost periodic systems), and beyond the scope of the present article.

\smallskip
The above is one of the reasons why we have chosen to give a construction of such a non-periodic system as a perturbation of a system which is already known: The starting point is the time-periodic system
\begin{equation*}
x' = A(t) x,
\end{equation*}
where $A(t)$ is either $A(t)$ in Section~\ref{sec:construction} or $A_{\epsilon}(t)$ as in Section~\ref{sec:continuous}.   In either case the principal eigenvalue of $A(t)$ is, at any time $t$, equal to $-\tfrac{1}{2}$.   Recall that we have found a solution $w(t) = (w_1(t)\ \ w_2(t))^{\top}$ such that its magnitude, $\lVert w(t) \rVert$, at times $t = 2, 4, 6, \dots$, diverges to infinity.

The idea is to perturb the matrix function $A(t)$ in a non-periodic way so that the principal eigenvalues of the perturbed matrices $\hat{A}(t)$ are, for each $t \ge 0$, less than $-\tfrac{1}{4}$, and yet there exists a solution of the system
\begin{equation*}
y' = \hat{A}(t) y
\end{equation*}
which does not converge to zero as $t \to \infty$.

We apply the simplest possible perturbation:  write
\begin{equation*}
\hat{A}(t) = A(t) + a(t) I,
\end{equation*}
where $a(t)$ is a continuous non-periodic function such that $0 < a(t) < \tfrac{1}{4}$ for all $t \ge 0$.  (For~instance, if we are looking for an almost periodic perturbation we can take $a(t) =\tfrac{1}{16} (2 + \sin{(t)} + \sin{(\sqrt{2}t)})$.)

It is a standard exercise in linear algebra that for any $t \ge 0$ the principal eigenvalue of $\hat{A}(t)$ equals $-\tfrac{1}{2} + a(t)$, consequently is less than $-\tfrac{1}{4}$.

Denote by $v(t) = (v_1(t)\ \ v_2(t))^{\top}$ the solution of $y' = \hat{A}(t) y$ taking the same value at $t = 0$ as $w(t)$.  We have
\begin{multline*}
v'_1(0) = (-1 + a(0)) v_1(0) + a_{12}(0) v_2(0) > - v_1(0) + a_{12}(0) v_2(0) \\
=  - w_1(0) + a_{12}(0) w_2(0) = w'_1(0)
\end{multline*}
and
\begin{multline*}
v'_2(0) = a_{21}(t) v_1(0)  + (-1 + a(0)) v_2(0)
\\
> a_{21}(0) v_1(0)  - v_2(0) = a_{21}(0) w_1(0)  - w_2(0) = w'_2(0),
\end{multline*}
As a consequence, $v_1(t) > w_1(t)$ and $v_2(t) > w_2(t)$ for $t > 0$ sufficiently close to $0$, say, for $t \in (0, \delta)$, where $\delta > 0$.  We claim that those inequalities hold indeed for all $t > 0$.    Suppose not.  Let then $\tau$ stand for the greatest lower bound of those $t > 0$ for which the inequalities do not hold.  We have $\tau \ge \delta > 0$.  Assume for the sake of definiteness that $w_1(t) < v_1(t)$ and $w_2(t) < v_2(t)$ for all $t \in (0, \tau)$ but $w_1(\tau) = v_1(\tau)$.  We have thus
\begin{multline*}
v'_1(\tau) = (-1 + a(\tau)) v_1(\tau) + a_{12}(\tau) v_2(\tau)
\\
> - v_1(\tau) + a_{12}(t) v_2(\tau) \ge - w_1(\tau) + a_{12}(t) w_2(\tau) = w'_1(\tau),
\end{multline*}
from which we deduce that $v_1(t) < w_1(t)$ for $t < \tau$, sufficiently close to $\tau$.  But this is in~contradiction to the definition of $\tau$.

In~particular, it follows that $\lVert v(2k) \rVert > \lVert w(2k) \rVert $ for $k = 1, 2, \dots$.   As the latter sequence has, as $k \to \infty$, limit infinity, the former sequence must have limit infinity, too.

\section{Discussion}
\label{sec:overview}

In the present section we put our results in the perspective of known results and discuss the relevance of various assumptions made by us during the construction.

\bigskip
We start by comparing our construction with that put forward by Josi\'c and Rosenbaum in~\cite{JoRo}.  There, the authors start by taking a $2$ by $2$ matrix $B$ having negative (real) eigenvalues with eigendirections close to each other.  In such a case, there is a good supply of $x \in \RR^2$ such that $Bx \cdot x > 0$.  Then they construct a nonautonomous system $x' = A(t) x$ by rotating the system $x' = B x$ around the origin at such angular velocity that some solution is being kept, for sufficient amount of time, in the set where $A(t) x \cdot x > 0$, which guarantees that this solution is unstable.

A quick look at the properties listed in Subsection~\ref{subsec:strongly-cooperative} shows that the above construction is impossible in the case of strongly cooperative systems.  Indeed, it is a direct consequence of the Perron--Frobenius theorem (\ref{prop:principal}) that at each $t$ one eigenvector of $A(t)$ must lie in the first quadrant.  So the mechanism in our construction must be different from that in~\cite{JoRo}:  it is an instantaneous (or near instantaneous) change of the eigenvectors which causes the system to be unstable.

\medskip
One could ask:  Why have we chosen to start with considering (time\nobreakdash-\hspace{0pt})periodic systems?  The reason is, at~least, threefold.  First, periodic systems can be considered the simplest form of nonautonomous systems (and remember that for autonomous systems the (in)stability is determined by the eigenvalues of the matrix of the system).  The second reason is that for periodic systems a strong tool is known, namely the Poincar\'e map.  That allows us to give a relatively simple proof of instability, just by calculating the eigenvalues of some easy to obtain matrix.

And last but not the least, when one has in mind that it is biological applications that are the main incentive, the fact that a lot of parameters of the systems are periodic in time is due to seasonal changes in the availability of food, etc.

\smallskip
Having said that, it should be emphasized that analogous constructions could be made for quasiperiodic, almost periodic and, more generally, any dependence on time.  An example of such construction is given in Section~\ref{sec:non-periodic}.

\medskip
In our example in Section~\ref{sec:construction} the switching between the matrices $A^{(1)}$ and $A^{(2)}$ occurs at constant intervals.  It seems that when one allows the switching times to be random variables, it could be possible to construct analogous examples.  That could be a subject both for an undergraduate work as well as of some research.

\smallskip
It should be stressed that a very quick change of the coefficients lies at the core of the phenomenon described.  Indeed, there are results showing that when the matrices $A(t)$ change slowly enough, the stability of the system is determined by the signs of the real parts of their eigenvalues (see~\cite{Solo}, and for extensions to linear systems on time scales, \cite{DaC} and~\cite{P}).

\medskip

\section*{Acknowledgments} I thank Jacek Cicho\'n for his help with \textit{Mathematica}, Kre\v{s}imir Josi\'c for his remarks and Hal Smith for calling my attention to Ref.~\cite{MaSm}.

I am very indebted to three anonymous referees, whose critical remarks have greatly contributed to improving the paper.

This research was supported by the NCN grant Maestro 2013/08/A/ST1/00275.

\end{document}